\newtheorem{thm}{Theorem}[section]
\newcommand {\bsig}{\boldsymbol{\sigma}}
\newcommand {\bnabla}{\boldsymbol{\nabla}}
\newcommand {\bgamma}{\boldsymbol{\gamma}}
\newcommand {\bx}{\mathbf x}
\newcommand {\by}{\mathbf y}
\newcommand {\bp}{\mathbf p}
\newcommand {\ba}{\mathbf a}
\newcommand{\R}{ \mathbb R}
\newcommand{\C}{ \mathbb C}
\begin{document}

\baselineskip=1.2\baselineskip

\title{The Dirac-Hardy and Dirac-Sobolev
inequalities in $L^1$}

\author{Alexander Balinsky, W. Desmond Evans\\}
\author{Tomio Umeda${}^{\dag}$}


\address{
School of Mathematics \\
Cardiff University,
23 Senghennydd Road, Cardiff CF24  4AG, United Kingdom}
\email{balinskya@carfiff.ac.uk}

\address{
School of Mathematics \\
Cardiff University,
23 Senghennydd Road, Cardiff CF24  4AG, United Kingdom}
\email{evanswd@carfiff.ac.uk}

\address{Department of Mathematical Sciences,
University of Hyogo, Himeji 671-2201, Japan
}
\email{umeda@sci.u-hyogo.ac.jp}


\maketitle


\textbf{Abstract.} Dirac-Sobolev and Dirac-Hardy inequalities in
$L^1$ are established in which the $L^p$ spaces which feature in
the classical Sobolev and Hardy inequalities are replaced by weak
$L^p$ spaces. Counter examples to the analogues of the classical
inequalities are shown to be provided by zero modes for
appropriate Pauli operators constructed by Loss and Yau.

 \vspace{15pt}

\textbf{Key words:} Dirac-Sobolev inequalities, Dirac-Hardy inequalities,
 zero modes, Sobolev inequalities, Hardy  inequalities

 \vspace{15pt}
\textbf{The 2000 Mathematical Subject Classification:} Primary 35R45; Secondary 35Q40

\vspace{20pt}

\section{Introduction}  \label{sec:introduction}

Let $ \bsig = (\sigma_1, \sigma_2, \sigma_3)$ be the triple of $2
\times 2$ Pauli matrices
\begin{equation}  \label{eqn:PauliMatrices}
\sigma_1 =
\begin{pmatrix}
0&1 \\ 1& 0
\end{pmatrix}, \,\,\,
\sigma_2 =
\begin{pmatrix}
0& -i  \\ i&0
\end{pmatrix}, \,\,\,
\sigma_3 =
\begin{pmatrix}
1&0 \\ 0&-1
\end{pmatrix}
\end{equation}
and set
\[
\bp := -i \bnabla, \ \ \ \bsig \cdot \bp = -i \sum_{j=1}^3
\sigma_j \frac{\partial}{\partial x_j}.
\]
By the Dirac-Sobolev inequality we mean the following: that when
$1 \le p < 3,~ p^{*} = 3p/(3-p),$ and for all $f \in
C_0^{\infty}(\R^3,\C^2)$, the space of $\C^2$-valued functions
whose components lie in $C_0^{\infty}(\R^3)$,
\begin{equation}  \label{eqn:DiracSobolevIneq}
\left( \int_{\R^3} |f(\bx)|_{p^*}^{p^*} d \bx \right)^{1/p^*} \le C(p)
\left( \int_{\R^3} |(\bsig \cdot \bp) f(\bx)|_p^{p} d \bx
\right)^{1/p}
\end{equation}
where for $\ba = (a_1,a_2) \in \C^2,~ |\ba|_p^p = |a_1|^p +
|a_2|^p.$ It is shown by Ichinose and Sait\={o} in
\cite{IchinoseSaito} (see \lq\lq Note added in proof\rq\rq\ 
 at end
of paper) that for $1<p<\infty$, there are positive
constants $c_1(p), c_2(p)$ which are such that
\begin{equation} \label{eqn:IchiSait}
c_1(p) \int_{\R^3} |\bp f(\bx)|_p^{p} d \bx \le \int_{\R^3}
|(\bsig \cdot \bp)f(\bx)|_p^{p} d \bx \le c_2(p) \int_{R^3} |\bp
f(\bx)|_p^{p} d \bx,
\end{equation}
and hence for $1<p<3$, (\ref{eqn:DiracSobolevIneq}) is a
consequence of the Sobolev inequality
\begin{equation}  \label{eqn:SobolevIneq}
\left( \int_{\R^3} |f(\bx)|_{p^*}^{p^*} d \bx \right)^{1/p^*} \le
\tilde{C}(p) \left( \int_{\R^3} |\bp f(\bx)|_p^{p} d \bx
\right)^{1/p}.
\end{equation}

 On defining the
Dirac-Sobolev space $H_{D,0}^{1,p}(\R^3,\C^2)$ to be the
completion of $C_0^{\infty}(\R^3,\C^2)$ with respect to the norm
\[
\|f\|_{D,1,p} := \left \{ \int_{\R^3} [|f(\bx)|^p_p + |(\bsig
\cdot \bp) f(\bx)|^p_p ]d \bx \right \}^{1/p},
\]
(\ref{eqn:IchiSait}) proves that $H^{1,p}_{D,0}(\R^3,\C^2)$ is
isomorphic to  $H^{1,p}_0(\R^3,\C^2)$ if $1<p<\infty$,
where  $H^{1,p}_0(\R^3,\C^2)$ denotes the Sobolev space defined
to be the completion of
 $C_0^{\infty}(\R^3,\C^2)$ with respect to the norm
\[
\|f\|_{S,1,p} := \left \{ \int_{\R^3} [|f(\bx)|^p_p +
| \bp f(\bx)|^p_p ]d \bx \right \}^{1/p}.
\]
However, as $p \rightarrow 1,~c_1(p) \rightarrow 0 $ and so
(\ref{eqn:IchiSait}) only implies that $H^{1,1}_0(\R^3,\C^2)$ is
continuously embedded in $H^{1,1}_{D,0}(\R^3,\C^2).$ In fact
Ichinose and Sait\={o} go on to prove that the embedding
$H^{1,1}_0(\R^3,\C^2) \hookrightarrow H^{1,1}_{D,0}(\R^3,\C^2)$ is
indeed strict. Hence, in the case $p=1,$
(\ref{eqn:DiracSobolevIneq}) is not a consequence of the analogous
Sobolev inequality. We prove that the $p=1$ case of
(\ref{eqn:DiracSobolevIneq}) is untrue. We demonstrate this with a
function used by Loss and Yau in \cite{LossYau} to prove the
existence of zero modes of a Pauli operator $ \{\bsig \cdot (\bp +
{\bf{A}})\}^2$ (or equivalently, of the Weyl-Dirac operator $
\bsig \cdot (\bp + {\bf{A}})$ ) with some appropriate magnetic
potential $ {\bf{A}}$. A result of Sait\={o} and Umeda in
\cite{SaitoUmeda-1} on the growth properties of zero modes of
Pauli operators indicates that zero modes have quite generally the
properties we need of the counter-example. We prove in Theorem
\ref{thm:WeakDiracSobolevIneq-1}
\begin{equation}\label{eqn:DiracSobolevIneq-2}
    \|f\|_{3/2,\infty} \le C_1 \int_{\R^3} |(\bsig \cdot
    \bp)f(\bx)| d \bx,
\end{equation}
where $|\cdot| = |\cdot|_1$ and for any $ q > 0,$
\begin{equation}\label{weakLi}
\|f\|_{q,\infty}^q :=  \sup_{t > 0} ~t^q \mu (\{\bx \in \R^3:
|f(\bx)|>t \}),
\end{equation}
$\mu$ denoting Lebesgue measure. We recall that $\|f\|_{q,\infty}
< \infty$ if and only if $f$ belongs to the weak-$L^q$ space
$L^{q,\infty}(\R^3,\C^2).$ Moreover, $ \|\cdot\|_{q,\infty}$ is
not a norm on $L^{q,\infty}(\R^3,\C^2)$ but for $q>1$ it is
equivalent to a norm; see \cite{EdmundsEvans}, Section 3.4.

Analogous questions arise for the Dirac-Hardy inequality
\begin{equation}  \label{eqn:DiracHardyIneq-1}
\int_{\R^3} \frac{|f(\bx)|_p^p}{|\bx|^p} d\bx \le C(p)
\int_{{\mathbb R}^3} |(\bsig \cdot \bp) f(\bx)|_p^p d \bx
\end{equation}
and similar answers are obtained. The inequality is true for $1< p
<\infty$ by $ (\ref{eqn:IchiSait})$, but not for $p=1$ in which case we
prove that
\begin{equation}  \label{eqn:DiracHardyIneq-2}
\big\Vert  |f|/|\cdot| \big\Vert_{1, \infty} \le C_2 \int_{\R^3}
|(\bsig \cdot \bp) f(\bx)|d \bx .
\end{equation}

The plan of the paper is as follows. In Section 2 we shall prove
the results concerning the Dirac-Sobolev and Dirac-Hardy
inequalities discussed above. We shall give estimates of the
optimal constant  $C(p)$  in the Dirac-Sobolev inequality
(\ref{eqn:DiracSobolevIneq}) for $1<p<3$ in Section
\ref{sec:OptimalConstant} and show that $C(p) \to\infty$ as
$p\downarrow 1$. In order to check if the results in Section 2 are
dimension related, we investigate higher dimensional analogues in
Section \ref{sec:DiracSobolevHardy-m}. A weak H\"{o}lder-type
inequality is given in an Appendix.


\section{The weak Dirac-Sobolev and Dirac-Hardy inequalities}
\label{sec:DiracSobolevHardy}

To show that the inequality (\ref{eqn:DiracSobolevIneq}) does not
hold, we shall prove that a counter-example is provided by a zero
mode for an appropriate Pauli (or Weyl-Dirac) operator constructed
by Loss-Yau in \cite{LossYau}. This is the $\C^2$-valued function
\begin{equation}  \label{eqn:LossYauZeroMode}
\psi(\mathbf x) = \frac{1}{(1+r^2)^{3/2}} (I + i \mathbf x \cdot
\bsig)
\begin{pmatrix}
1  \\  0
\end{pmatrix},
\quad r=|\mathbf x|,
\end{equation}
where $I$ is the $2\times 2$ identity matrix.
In view of  the anti-commutation
relation $\sigma_j \sigma_k + \sigma_k \sigma_j = 2 \delta_{jk}I$,
it follows that
\begin{equation}  \label{eqn:opt-2}
|\psi(\mathbf x) | = \frac{1}{1 + r^2}.
\end{equation}
Also, $\psi$ satisfies  the Loss-Yau equation
\begin{equation}    \label{eqn:LossYauZeroMode-1}
(\bsig \cdot \mathbf p) \psi (\mathbf x) = \frac{3}{1 + r^2}
\psi(\mathbf x).
\end{equation}
Let $\chi_n \in C_0^{\infty}(\mathbb R)$ be
such that
\begin{equation}  \label{eqn:cutoff-n}
\chi_n(r) =
\begin{cases}
1,  &  r \le n  \\
0,  &   r \ge n + 2,
\end{cases}
\quad |\chi_n^{\prime}(r)| \le 1.
\end{equation}
Then  $\psi_n:= \chi_n \psi
\in C_0^{\infty}({\mathbb R}^3, {\mathbb C}^2)$
and we see that
\begin{gather}   \label{eqn:LY-1}
\begin{split}
\Vert
  (\bsig \cdot \mathbf p) \psi_n
 \Vert_{L^1({\mathbb R}^3, {\mathbb C}^2)}
&= \Vert \chi_n(\bsig \cdot \mathbf p)  \psi -i \, \chi_n^{\prime}
(\bsig \cdot \frac{\mathbf x}{r}) \psi
\Vert_{L^1({\mathbb R}^3, {\mathbb C}^2)}  \\
&\le
4\pi \Big( \int_0^{n+2} \frac{3}{\, 1+r^2 \,} \, dr
  + \int_n^{n+2}   \, dr  \Big)                   \\
&\le C_0,
\end{split}
\end{gather}
for some constant positive $C_0$, independent of $n$.

Now suppose that the case $p=1$ of the inequality
(\ref{eqn:DiracSobolevIneq}) is true. Then it would follow from
(\ref{eqn:LY-1}) that
\begin{gather}   \label{eqn:LY-2}
\begin{split}
C_0
&\ge
\Vert
 \psi_n
 \Vert_{L^{3/2}({\mathbb R}^3, {\mathbb C}^2)}   \\
&\ge
\left(
 \int_{|\mathbf x|\le n} |\psi(\mathbf x)|^{3/2} \, d\mathbf x
 \right)^{2/3}\\
&\ge
\mbox{const.} \, (\log n)^{2/3}.
\end{split}
\end{gather}
and hence a contradiction.

The properties of the zero mode  $\psi$, defined by
(\ref{eqn:LossYauZeroMode}), which lead to the inequality
(\ref{eqn:DiracSobolevIneq}) being contradicted when $p=1$ are
that $(\bsig \cdot \mathbf p) \psi \in L^1({\mathbb R}^3, {\mathbb
C}^2)$ and $\psi(\mathbf x) \asymp r^{-2}$ at infinity ({\it
i.e.}, $r^2\psi(\mathbf x)$ goes to a constant vector in ${\mathbb
C}^2$ as $r \to \infty$). It was shown in Sait\={o}-Umeda
\cite{SaitoUmeda-1} that these two properties are satisfied by the
zero modes of any Weyl-Dirac operator
\begin{equation}  \label{eqn:WeylDiracOp}
{\mathbb D}_A=\bsig \cdot \big( \mathbf p
 + \mathbf A(\mathbf x)  \big)
\end{equation}
whose magnetic potential
$\mathbf A  \; =(A_1, \, A_2, \, A_3)$
is such that
\begin{equation}
A_j \; \mbox{ is measurable}, \quad
  |A_j(\mathbf x)| \le C (1 + r)^{-\rho}, \;\;\; \rho >1
\end{equation}
for $j=1$, $2$, $3$.

As was mentioned in the Introduction, what is true is the following

\begin{thm}  \label{thm:WeakDiracSobolevIneq-1}
There exists a positive constant $C_1$ such that
\begin{equation}   \label{eqn:DiracSobolevIneq-3}
\Vert f \Vert_{L^{3/2, \infty}({\mathbb R}^3, {\mathbb C}^2)}
\le
C_1 \Vert
 (\sigma \cdot \mathbf p) f
 \Vert_{L^1({\mathbb R}^3, {\mathbb C}^2)}
\end{equation}
for all $f \in C_0^{\infty}({\mathbb R}^3, {\mathbb C}^2)$.
\end{thm}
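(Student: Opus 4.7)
The plan is to reduce the inequality to an endpoint weak-type bound for the Riesz potential $I_1$ on $\mathbb{R}^3$, exploiting the algebraic identity $(\boldsymbol{\sigma}\cdot\mathbf{p})^2 = -\Delta\, I$ which follows from the anti-commutation relations $\sigma_j\sigma_k+\sigma_k\sigma_j = 2\delta_{jk}I$. For $f\in C_0^\infty(\mathbb{R}^3,\mathbb{C}^2)$, set $g := (\boldsymbol{\sigma}\cdot\mathbf{p})f \in C_0^\infty(\mathbb{R}^3,\mathbb{C}^2)$. Applying $(-\Delta)^{-1}$ to both sides of $(\boldsymbol{\sigma}\cdot\mathbf{p})^2 f = (\boldsymbol{\sigma}\cdot\mathbf{p})g$ gives the representation
\[
f = (-\Delta)^{-1}(\boldsymbol{\sigma}\cdot\mathbf{p})\, g.
\]
The decay of $f$ at infinity (since $f$ has compact support and $(-\Delta)^{-1}$ is convolution with $1/(4\pi|\mathbf{x}|)$, so the right-hand side decays like $1/|\mathbf{x}|^2$) together with uniqueness of Poisson's equation in a suitable class makes this identity rigorous.

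Next I would compute the integral kernel of $(-\Delta)^{-1}(\boldsymbol{\sigma}\cdot\mathbf{p})$ explicitly. Writing out the Newton potential and integrating by parts (legitimate since $g\in C_0^\infty$) yields
\[
f(\mathbf{x}) = \frac{i}{4\pi}\int_{\mathbb{R}^3} \boldsymbol{\sigma}\cdot\frac{\mathbf{x}-\mathbf{y}}{|\mathbf{x}-\mathbf{y}|^3}\, g(\mathbf{y})\, d\mathbf{y}.
\]
Since the matrix kernel $\boldsymbol{\sigma}\cdot(\mathbf{x}-\mathbf{y})/|\mathbf{x}-\mathbf{y}|^3$ has operator norm comparable to $1/|\mathbf{x}-\mathbf{y}|^2$ (using $|\boldsymbol{\sigma}\cdot\mathbf{v}|^2 = |\mathbf{v}|^2 I$ from the anti-commutation relations once more), the pointwise bound
\[
|f(\mathbf{x})| \le \frac{C}{4\pi}\int_{\mathbb{R}^3}\frac{|g(\mathbf{y})|}{|\mathbf{x}-\mathbf{y}|^2}\, d\mathbf{y} = C'\,(I_1|g|)(\mathbf{x})
\]
follows, where $I_1$ is the Riesz potential of order $1$ on $\mathbb{R}^3$.

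To finish, I would invoke the classical endpoint Hardy--Littlewood--Sobolev theorem: the Riesz potential $I_\alpha$ maps $L^1(\mathbb{R}^n)$ boundedly into $L^{n/(n-\alpha),\infty}(\mathbb{R}^n)$. Taking $n=3$ and $\alpha=1$ yields $I_1 : L^1(\mathbb{R}^3) \to L^{3/2,\infty}(\mathbb{R}^3)$, and the monotonicity of the quasi-norm $\|\cdot\|_{3/2,\infty}$ with respect to the pointwise bound gives
\[
\|f\|_{L^{3/2,\infty}(\mathbb{R}^3,\mathbb{C}^2)} \le C'\,\|I_1|g|\|_{L^{3/2,\infty}(\mathbb{R}^3)} \le C_1\,\|g\|_{L^1(\mathbb{R}^3,\mathbb{C}^2)},
\]
which is exactly (\ref{eqn:DiracSobolevIneq-3}).

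The only non-routine step is the justification of the inversion $f = (-\Delta)^{-1}(\boldsymbol{\sigma}\cdot\mathbf{p})g$ and the accompanying integration by parts when $f$ is merely $C_0^\infty$; this is standard but should be stated carefully, since the relation $(\boldsymbol{\sigma}\cdot\mathbf{p})g = -\Delta f$ only determines $f$ modulo harmonic functions and one uses the decay of both $f$ and the convolution $(-\Delta)^{-1}(\boldsymbol{\sigma}\cdot\mathbf{p})g$ at infinity to pin down the correct representative. Everything else is direct kernel analysis plus the endpoint Hardy--Littlewood--Sobolev inequality.
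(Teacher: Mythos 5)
Your proposal is correct and follows essentially the same route as the paper: represent $f$ via the fundamental solution of $\bsig\cdot\bp$ (the kernel $\frac{i}{4\pi}\,\bsig\cdot(\bx-\by)/|\bx-\by|^3$), bound $|f|$ pointwise by a constant times the Riesz potential $I_1(|g|)$, and conclude by the endpoint weak-type $(1,3/2)$ boundedness of $I_1$. Your extra remarks on justifying the inversion $f=(-\Delta)^{-1}(\bsig\cdot\bp)g$ modulo harmonic functions are a careful addition to what the paper states tersely, but the argument is the same.
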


\begin{proof}
Let $g=  (\sigma \cdot \mathbf p) f$. Since $(\bsig \cdot \bp)^2 =
-\Delta$ and the fundamental solution of $-\Delta $ in $\R^3$ is
convolution with $(1/4 \pi |\cdot|)$, it follows that $(\bsig
\cdot \bp)$ has fundamental solution with kernel $ (\bsig \cdot
\bp)(1/4 \pi |\cdot|)$ and hence that
\begin{eqnarray} \label{eqn:Riesz-1}
f(\mathbf x)&=&  \frac{-i}{4\pi} \int_{{\mathbb R}^3} [( \bsig
\cdot \bnabla){|\mathbf x-\mathbf y|^{-1}}]
 g(\mathbf y) \, d\mathbf y \nonumber \\
 &=&  \frac{i}{4\pi} \int_{{\mathbb R}^3}
\frac{ \bsig \cdot (\mathbf x - \mathbf y)}{|\mathbf x-\mathbf
y|^3} g(\by) d \by.
\end{eqnarray}
Note that this also follows from the more general result in
Sait\={o}-Umeda \cite[Theorem 4.2]{SaitoUmeda-2}. Consequently
\begin{equation}  \label{eqn:Riesz-2}
|f(\mathbf x)|
\le
\displaystyle{
\frac{1}{4\pi}
\int_{{\mathbb R}^3}
\frac{1}{|\mathbf x-\mathbf y|^2}
 |g(\mathbf y)| \, d\mathbf y=:
\frac{1}{4\pi}I_1(|g|)(\mathbf x)},
\end{equation}
where $I_1(|g|)$ is the 3-dimensional Riesz potential of $|g|$;
see Edmunds and Evans \cite[Section 3.5]{EdmundsEvans} for the
terminology and properties we need. In view of \cite[Remark
3.5.7(i)]{EdmundsEvans}, we see that the Riesz potential $I_1$ is
of weak type $(1, \, 3/2; 3, \, \infty)$. In particular, $I_1$ is
of weak type $(1, \, 3/2)$ (cf. \cite[Theorem
3.5.13]{EdmundsEvans},  \cite[Theorem 1, pp.119 - 120]{Stein}), which means that there exists a positive
constant $C$ such that for all $u \in L^1({\mathbb R}^3)$
\begin{equation}
\Vert I_1(u) \Vert_{L^{3/2, \infty}({\mathbb R}^3)}
\le C
\Vert u \Vert_{L^1({\mathbb R}^3)}.
\end{equation}
The inequality (\ref{eqn:DiracSobolevIneq-3}) follows.
\end{proof}

It is evident that the two properties of the zero mode $\psi$
defined by  (\ref{eqn:LossYauZeroMode}) also leads to a
contradiction of the inequality (\ref{eqn:DiracHardyIneq-1}). What
is now true is the following:

\begin{thm}   \label{thm:WeakDiracHardyIneq-1}
For all $f \in C_0^{\infty}({\mathbb R}^3, {\mathbb C}^2)$
\begin{equation}  \label{eqn:DiracHardyIneq-3}
\Vert
 f/|\cdot|
 \Vert_{L^{1, \infty}({\mathbb R}^3, \,{\mathbb C}^2)} \le C_2 \Vert (\bsig \cdot \mathbf p) f \Vert_{L^1({\mathbb R}^3,
\,{\mathbb C}^2)} ,
\end{equation}
where $C_{2} \le (9 \pi)^{1/3} C_1$ and
 $C_1$ is the optimal constant in {\rm(\ref{eqn:DiracSobolevIneq-3})}.
\end{thm}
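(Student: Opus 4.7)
The plan is to deduce the weak Dirac–Hardy inequality from the weak Dirac–Sobolev inequality of Theorem \ref{thm:WeakDiracSobolevIneq-1} by establishing the pointwise–to–distributional estimate
\[
\big\| f / |\cdot| \big\|_{L^{1,\infty}(\R^3,\C^2)} \le (9\pi)^{1/3} \| f \|_{L^{3/2,\infty}(\R^3,\C^2)}.
\]
This is morally a weak-type Hölder inequality with the exponent identity $1 = \tfrac{1}{3/2} + \tfrac13$, using the key observation that $|\bx|^{-1} \in L^{3,\infty}(\R^3)$ with distribution function
\[
\mu\bigl(\{\bx : 1/|\bx| > s\}\bigr) \;=\; \mu\bigl(\{|\bx|<1/s\}\bigr) \;=\; \frac{4\pi}{3\, s^3}.
\]

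The first step is the following set-theoretic splitting, valid for every $R>0$ and $t>0$:
\[
\{\bx : |f(\bx)|/|\bx| > t\} \;\subseteq\; \{|\bx|<R\} \;\cup\; \{|f|>tR\},
\]
since if $|\bx|\ge R$ and $|f(\bx)|/|\bx|>t$ then $|f(\bx)|>tR$. Combining the volume formula for balls with the definition of the weak $L^{3/2}$ quasi-norm yields
\[
\mu\bigl(\{|f|/|\cdot| > t\}\bigr) \;\le\; \frac{4\pi}{3}R^3 + \frac{\|f\|_{L^{3/2,\infty}}^{3/2}}{(tR)^{3/2}}.
\]

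The second step is to optimize the free parameter $R$. A routine differentiation in $R$ gives the minimizer determined by $R^{9/2} = \tfrac{3\|f\|_{L^{3/2,\infty}}^{3/2}}{8\pi\, t^{3/2}}$. Substituting back, the two terms combine with relative weights $1/2$ and $1$, and after a short algebraic simplification—using that the cube of $\tfrac{3}{2}(8\pi/3)^{1/3}$ equals $9\pi$—one obtains
\[
t\, \mu\bigl(\{|f|/|\cdot| > t\}\bigr) \;\le\; (9\pi)^{1/3} \|f\|_{L^{3/2,\infty}}.
\]
Taking the supremum over $t>0$ gives the intermediate estimate displayed above.

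The third (and final) step is to apply Theorem \ref{thm:WeakDiracSobolevIneq-1} to $f\in C_0^\infty(\R^3,\C^2)$, which bounds $\|f\|_{L^{3/2,\infty}}$ by $C_1 \|(\bsig\cdot\bp)f\|_{L^1}$, producing \eqref{eqn:DiracHardyIneq-3} with $C_2 \le (9\pi)^{1/3}\,C_1$. I do not expect a genuine obstacle: the only slightly delicate point is tracking the constant precisely so that it comes out to $(9\pi)^{1/3}$ rather than a cruder number such as $\tfrac{3}{2}\bigl(\tfrac{8\pi}{3}\bigr)^{1/3}$—this is an identity rather than a real difficulty, and it is what singles out the optimal choice of $R$ in the splitting. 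Invoking the general weak Hölder inequality promised in the Appendix would give the same structural statement but with a less transparent constant, so carrying out the explicit optimization is preferable here.
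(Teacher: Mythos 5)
Your argument is correct and yields exactly the constant $(9\pi)^{1/3}=3^{2/3}\pi^{1/3}$ claimed in the theorem: the splitting $\{|f|/|\cdot|>t\}\subseteq\{|\bx|<R\}\cup\{|f|>tR\}$ is valid, the optimal $R$ is as you state, and the algebra checks out. It is, however, a genuinely different route from the paper's. The paper simply applies the weak H\"older inequality of the Appendix (Theorem \ref{thm:WHolder}) with $p=3/2$, $q=3$ and $g=1/|\cdot|$, using $\Vert 1/|\cdot|\Vert_{3,\infty}=(4\pi/3)^{1/3}$; its constant $(2^{1/3}+2^{-2/3})(4\pi/3)^{1/3}=3\cdot 2^{-2/3}(4\pi/3)^{1/3}=3^{2/3}\pi^{1/3}$ coincides with yours. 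The underlying mechanisms differ: you use a multiplicative splitting that exploits the fact that the superlevel set of the weight $1/|\cdot|$ is a ball of explicitly known measure, while the Appendix splits additively via Young's inequality $|fg|\le p^{-1}(\varepsilon|f|)^p+q^{-1}(\varepsilon^{-1}|g|)^q$ and optimizes over $\varepsilon$. That the two optimizations give identical constants is no accident, since $(q/p)^{1/q}+(p/q)^{1/p}=p^{1/p}q^{1/q}\bigl(p^{-1}+q^{-1}\bigr)=p^{1/p}q^{1/q}$ for conjugate exponents, which is exactly what the multiplicative splitting produces. What your version buys is self-containedness and transparency of the constant; what the paper's version buys is a reusable lemma valid for arbitrary $g\in L^{q,\infty}(\R^d)$, which it needs again for the $m$-dimensional case in Theorem \ref{thm:WeakDiracHardyIneq-1-m}.
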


\begin{proof}
On applying the weak H\"older inequality in the Appendix with
$p=3/2$ and $q=3$, and noting that $\Vert 1/|\cdot| \Vert_{3,
\infty} = (4\pi /3)^{1/3}$, we get
\begin{equation}  \label{eqn:DiracHardyIneq-4}
\Vert
 f/|\cdot|
 \Vert_{1, \infty}
\le 3^{2/3} \pi^{1/3}
\Vert f \Vert_{3/2, \infty}.
\end{equation}
Hence the theorem follows from (\ref{eqn:DiracSobolevIneq-3}).
\end{proof}


\section{Estimate of the optimal constants}
\label{sec:OptimalConstant}

In this section, we estimate the optimal constant
$C(p)$ in the inequality (\ref{eqn:DiracSobolevIneq})
for $1 < p <3$, and show that
$C(p) \to \infty$ as $p \downarrow 1$.

Let $\psi$ be the Loss-Yau zero mode
 defined by (\ref{eqn:LossYauZeroMode}). It does not lie in
 $C_0^{\infty}(\R^3,\C^2)$ but is in $H^{1,p}_{D,0}(\R^3,\C^2).$
Hence the optimal constant $C(p)$ must satisfy the inequality
\begin{equation}  \label{eqn:opt-1}
C(p) \ge \Vert \psi \Vert_{L^{p^*}({\mathbb R}^3, \,{\mathbb
C}^2)} \big/ \Vert (\bsig \cdot \mathbf p)\psi \Vert_{L^p
({\mathbb R}^3, \,{\mathbb C}^2)},
\end{equation}
where $p^*= 3p/(3-p)$. On passing to polar coordinates, we have
\begin{gather}  \label{eqn:opt-3}
\begin{split}
\Vert
\psi
\Vert_{{p^*}}^{p^*}
&=
4 \pi \int_0^{\infty}  (1 + r^2)^{-p^*} r^2 dr  \\
&\ge
4\pi \big\{
\int_0^1 2^{-p^*} r^2 \, dr
 +
  \int_1^{\infty} (2r^2)^{-p^*} r^2  dr \big\}   \\
&=
4\pi\, 2^{-p^*}\, 3^{-2} \frac{2p}{p-1}.
\end{split}
\end{gather}
On the other hand, by (\ref{eqn:LossYauZeroMode-1}), we see that
\begin{gather}  \label{eqn:opt-4}
\begin{split}
\Vert (\bsig \cdot \mathbf p)\psi \Vert_p^p &=
\int_{{\mathbb R}^3} \frac{3^p}{(1 + r^2)^{2p}} \, dx   \\
&=
4 \pi \, 3^p \int_0^{\infty}  (1 + r^2)^{-2p} r^2 dr  \\
&\le
4\pi \, 3^p \big\{
\int_0^1  r^2 \, dr
 +
  \int_1^{\infty} r^{-4p+2}  dr \big\}   \\
&=
\pi \, 2^4  3^{p-1}  \frac{p}{4p -3}.
\end{split}
\end{gather}
Combining (\ref{eqn:opt-1}) with
  (\ref{eqn:opt-3}) and (\ref{eqn:opt-4}),
we obtain
\begin{equation}  \label{eqn:opt-5}
C(p) \ge
\pi^{-1/3} \, 2^{-2 -1/p} \, 3^{-1/3 -1/p} \,
\frac{\, p^{-1/3}(4p-3)^{1/p} \,}{ (p-1)^{1/p - 1/3} }.
\end{equation}
It is evident that the right hand side
of (\ref{eqn:opt-5}) goes to $\infty$
as $p \downarrow 1$.

We recall that for $p>1,$ the optimal constant $\tilde{C}(p)$ in
the Sobolev inequality (\ref{eqn:SobolevIneq})  is
\[
\tilde{C}(p) =
\pi^{-1/2}3^{-1/p}\left(\frac{p-1}{3-p}\right)^{(p-1)/p}\left
\{\frac{\Gamma(5/2)\Gamma(3)}{\Gamma(3/p)\Gamma(4-3/p)}\right
\}^{1/3}
\]
which tends to $\tilde{C}(1)$, the optimal constant in the case
$p=1,$ as $p \rightarrow 1.$


\section{The weak Dirac-Sobolev and weak Dirac-Hardy inequalities\\
 in $m$ dimensions}
\label{sec:DiracSobolevHardy-m}

Let $\gamma_1$, $\cdots$, $\gamma_m$ be Hermitian $ \ell \times
\ell$ matrices satisfying the anti-commutation relations
\begin{equation}  \label{eqn:gammamatrices-1}
\gamma_j \gamma_k + \gamma_k \gamma_j = 2 \delta_{jk} I,
\end{equation}
where $I$ denotes the $\ell \times \ell$ identity matrix. For
example, we can take $\ell=2^{m-2}$ and construct the matrices by
the following iterative procedure. To indicate the dependence on
$m$, write the matrices as $\gamma_1^{(m)}$, $\cdots$,
$\gamma_m^{(m)}$. For $m=3$, $\ell=2$ 
and  they are given by the Pauli matrices
in (\ref{eqn:PauliMatrices}). Given matrices $\gamma_1^{(m)}$,
$\cdots$, $\gamma_m^{(m)}$ we define
\begin{equation}
\gamma_j^{(m+1)} =
\begin{pmatrix}
0& \gamma_j^{(m)}  \\ \gamma_j^{(m)} & 0
\end{pmatrix},  \,\; j=1, \, \cdots, \, m,
\quad
\gamma_{m+1}^{(m+1)} =
\begin{pmatrix}
I & 0  \\
0  & -I
\end{pmatrix}.
\end{equation}

The $m$-dimensional analogue of the inequality
(\ref{eqn:DiracSobolevIneq}) for $p=1$  is
\begin{equation}  \label{eqn:DiracSobolevIneq-1-m}
\left( \int_{{\mathbb R}^m} |f(\mathbf x)|^{m/(m-1)} d \mathbf x
\right)^{(m-1)/m} \le C \int_{{\mathbb R}^m} |(\bgamma \cdot
\mathbf p) f(\mathbf x)| d \mathbf x
\end{equation}
for $f \in C^{\infty}_0({\mathbb R}^m, {\mathbb C}^{\ell})$,
where
\begin{equation*}
\bgamma \cdot \bp = -i\sum_{j=1}^m \gamma_j
\frac{\partial}{\partial x_j} , \quad \bp= -i \bnabla.
\end{equation*}
To show that (\ref{eqn:DiracSobolevIneq-1-m}) does not hold we
introduce an $m$-dimensional analogue of the Loss-Yau zero mode,
namely
\begin{equation}  \label{eqn:LossYauZeroMode-m}
\psi(\mathbf x) = \frac{1}{(1+r^2)^{m/2}} (I + i \mathbf x \cdot
\bgamma) \phi_0, \quad r=|\mathbf x|,
\end{equation}
where $\phi_0 ={}^t(1, \, 0, \cdots,\, 0) \in {\mathbb C}^{\ell}$.
It follows from the anti-commutation relations
(\ref{eqn:gammamatrices-1}) that
\begin{equation}  \label{eqn:opt-2-m}
|\psi(\mathbf x) | = \frac{1}{(1 + r^2)^{(m-1)/2}} \, ,
\end{equation}
and that $\psi$ satisfies the
$m$-dimensional analogue of the Loss-Yau
equation (\ref{eqn:LossYauZeroMode-1}),
namely,
\begin{equation}    \label{eqn:LossYauZeroMode-1-m}
(\bgamma \cdot \mathbf p) \psi (\mathbf x) = \frac{m}{1 + r^2}
\psi(\mathbf x).
\end{equation}
Let $\chi_n \in C_0^{\infty}(\mathbb R)$ be
the same function
as in (\ref{eqn:cutoff-n}), and put
 $\psi_n:= \chi_n \psi \in
C_0^{\infty}({\mathbb R}^m, {\mathbb C}^{\ell})$. As in to
(\ref{eqn:LY-1}), we see that
\begin{gather}   \label{eqn:LY-1-m}
\begin{split}
\Vert
  (\bgamma \cdot \mathbf p) \psi_n
 \Vert_{L^1({\mathbb R}^m, {\mathbb C}^{\ell}) }
&= \Vert \chi_n(\bgamma \cdot \mathbf p)  \psi -i \,\chi_n^{\prime}
(\bgamma \cdot \frac{\mathbf x}{r}) \psi
\Vert_{ L^1({\mathbb R}^m, {\mathbb C}^{\ell}) }  \\
&\le
S_m \Big( \int_0^{n+2} \frac{m}{\, 1+r^2 \,} \, dr
 +  \int_n^{n+2}   \, dr  \Big) \\
\noalign{\vskip 4pt}
&\le C_0,
\end{split}
\end{gather}
for some positive constant  $C_0$, independent of $n$.
Here $S_m$ is the surface area of 
the unit sphere in ${\mathbb R}^m$. If the
inequality (\ref{eqn:DiracSobolevIneq-1-m}) is true then it would
follow from (\ref{eqn:DiracSobolevIneq-1-m}) and
(\ref{eqn:LY-1-m}) that
\begin{gather}   \label{eqn:LY-2-m}
\begin{split}
C_0
&\ge
\Vert
 \psi_n
 \Vert_{L^{m/(m-1)}({\mathbb R}^m, {\mathbb C}^{\ell})}   \\
&\ge
\left(
 \int_{|\mathbf x|\le n}
|\psi(\mathbf x)|^{m/(m-1)} \, d\mathbf x
 \right)^{(m-1)/m}\\
&\ge
\mbox{const.} \, (\log n)^{(m-1)/m}.
\end{split}
\end{gather}
which is a contradiction. Therefore the inequality
(\ref{eqn:DiracSobolevIneq-1-m}) does not hold. Instead, what is
true is the following inequality.

\begin{thm}  \label{thm:WeakDiracSobolevIneq-1-m}
There exists a positive constant $C_{1,m}$ such that
\begin{equation}   \label{eqn:DiracSobolevIneq-3-m}
\Vert  f
\Vert_{L^{m/(m-1), \infty}({\mathbb R}^m, {\mathbb C}^{\ell})}
\le
C_{1,m}  \,
\Vert
 (\bgamma \cdot \mathbf p) f
 \Vert_{L^1({\mathbb R}^m, {\mathbb C}^{\ell})}
\end{equation}
for all $f \in C_0^{\infty}({\mathbb R}^m, {\mathbb C}^{\ell})$.
\end{thm}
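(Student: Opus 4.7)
The plan is to mimic the proof of Theorem \ref{thm:WeakDiracSobolevIneq-1} almost verbatim, replacing the 3-dimensional fundamental solution of $-\Delta$ and the 3-dimensional Riesz potential by their $m$-dimensional counterparts. Assume $m \ge 3$ throughout (which is consistent with the iterative construction of the $\gamma_j$).

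First I would observe that the anti-commutation relations (\ref{eqn:gammamatrices-1}) yield $(\bgamma \cdot \bp)^2 = -\Delta I$ on $C_0^\infty(\mathbb R^m, \mathbb C^\ell)$. Since for $m \ge 3$ the fundamental solution of $-\Delta$ on $\mathbb R^m$ is convolution with $\Phi_m(\bx) = c_m |\bx|^{-(m-2)}$ for a suitable constant $c_m>0$, it follows that $\bgamma \cdot \bp$ has fundamental solution $(\bgamma \cdot \bp)\Phi_m$. Writing $g = (\bgamma \cdot \bp) f$, this gives the representation
\begin{equation*}
f(\bx) = -i\, c_m(m-2) \int_{\mathbb R^m} \frac{\bgamma \cdot (\bx - \by)}{|\bx - \by|^m}\, g(\by)\, d\by
\end{equation*}
for $f \in C_0^\infty(\mathbb R^m, \mathbb C^\ell)$, exactly analogous to (\ref{eqn:Riesz-1}).

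The second step is a pointwise bound: since $|\bgamma \cdot \bz| \le C|\bz|$ (the matrices $\gamma_j$ are bounded), we obtain
\begin{equation*}
|f(\bx)| \le C'_m \int_{\mathbb R^m} \frac{1}{|\bx - \by|^{m-1}}\, |g(\by)|\, d\by = C'_m\, I_1(|g|)(\bx),
\end{equation*}
where $I_1$ denotes the Riesz potential of order $1$ in $\mathbb R^m$.

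Finally, I would invoke the standard fact (Edmunds-Evans \cite{EdmundsEvans}, Section 3.5; or Stein \cite{Stein}) that $I_1$ on $\mathbb R^m$ is of weak type $(1,\, m/(m-1))$, i.e.
\begin{equation*}
\Vert I_1(u)\Vert_{L^{m/(m-1),\infty}(\mathbb R^m)} \le C\, \Vert u\Vert_{L^1(\mathbb R^m)}
\end{equation*}
for all $u \in L^1(\mathbb R^m)$. Combining this with the preceding bound yields (\ref{eqn:DiracSobolevIneq-3-m}) with $C_{1,m} = C\, C'_m$.

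There is no real obstacle: all three ingredients (the identity $(\bgamma\cdot\bp)^2 = -\Delta$, the explicit fundamental solution of $-\Delta$ on $\mathbb R^m$ for $m \ge 3$, and the weak-type estimate for the Riesz potential of order one in general dimension) are either immediate from the anti-commutation relations or are classical. The only point deserving a remark is that the argument requires $m \ge 3$ so that $\Phi_m$ has the stated homogeneity; this is compatible with the setting of the section, where the iterative construction of the $\gamma_j$ begins at $m=3$.
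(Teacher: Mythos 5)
Your proposal is correct and follows essentially the same route as the paper: reduce to the $m$-dimensional fundamental solution of $-\Delta$, differentiate it to get the kernel $\bgamma\cdot(\bx-\by)/|\bx-\by|^m$ for $\bgamma\cdot\bp$, bound pointwise by the Riesz potential $I_1(|g|)$, and invoke its weak type $(1,\,m/(m-1))$. The only cosmetic difference is that the paper reaches the representation formula via $J_2(-\Delta)f=f$ followed by an integration by parts, whereas you write down the fundamental solution of $\bgamma\cdot\bp$ directly, exactly as the paper itself does in the three-dimensional case.
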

\begin{proof}

Let $f \in C_0^{\infty}({\mathbb R}^m, {\mathbb C}^{\ell})$, and
define $g= (\bgamma \cdot \mathbf p)f$. Since $ (\bgamma \cdot
\bp)^2 = -\Delta I,$ we have that $(-\Delta)f = (\bgamma \cdot
\bp)g.$ By Stein \cite[p.118, (7)]{Stein},
\begin{equation}
J_2(-\Delta) u = u,  \quad u \in C_0^{\infty}({\mathbb R}^m, \mathbb C),
\end{equation}
where
\begin{equation}
J_2(u) = \frac{\Gamma((m-2)/2)}{4 \pi^{m/2}}I_2(u),
\quad
I_2(u)(\mathbf x) = \int_{{\mathbb R}^m}
\frac{1\;\;\;}{\, |\mathbf x-\mathbf y|^{m-2} \,} \, u (\mathbf y) \,
d \mathbf y.
\end{equation}
It follows that
\begin{gather}
\begin{split}
f(\bx)
& = J_2(-\Delta) f(\bx)   \\
\noalign{\vskip 4pt}
&=
\frac{\Gamma((m-2)/2)}{4 \pi^{m/2}}
 \int_{\R^m}
\frac{1\;\;\;}{\, |\bx-\by|^{m-2} \,} \, (\bgamma \cdot \bp)
g(\by) \, d \by.
\end{split}
\end{gather}
On integration by parts, this yields
\begin{equation}  \label{eqn:lemRiesz-m}
f(\mathbf x) = \frac{\Gamma((m-2)/2)}{4 \, \pi^{m/2}} \int_{\R^m}
\frac{\, i \, \bgamma \cdot (\bx-\by)}{|\bx-\by|^m} \, g(\by) \,
d\by.
\end{equation}
Then it follows that
\begin{gather}  \label{eqn:Riesz-2-m}
\begin{split}
|f(\mathbf x)|
&\le
\frac{\Gamma((m-2)/2)}{4 \pi^{m/2}}
\int_{{\mathbb R}^m}
\frac{1}{|\mathbf x-\mathbf y|^{m-1} }
 |g(\mathbf y)| \, d\mathbf y   \\
&=
\frac{\Gamma((m-2)/2)}{4 \pi^{m/2}}
\frac{2 \pi^{(m+2)/2}}{\Gamma((m-1)/2)}
\,
I_1(|g|)(\mathbf x).
\end{split}
\end{gather}
Here $I_1(|g|)$ is the $m$-dimensional Riesz potential of $|g|$; see
\cite[Section 3.5]{EdmundsEvans}.
In view of \cite[Remark 3.5.7(i)]{EdmundsEvans},
we see that the Riesz potential $I_1$ is
of weak type $(1, \, m/(m-1); m, \, \infty)$,
in particular,  of
weak type $(1, \, m/(m-1) )$
(cf.
\cite[Theorem 3.5.13]{EdmundsEvans},
 \cite[Theorem 1, pp.119 - 120]{Stein}),
which means that
there exists a positive constant $C$ such that
for all $u \in L^1({\mathbb R}^m)$
\begin{equation}
\Vert I_1(u) \Vert_{L^{m/(m-1), \infty}({\mathbb R}^m)}
\le C
\Vert u \Vert_{L^1({\mathbb R}^m)}.
\end{equation}
The inequality (\ref{eqn:DiracSobolevIneq-3-m}) follows.
\end{proof}

\vspace{6pt}

The $m$-dimensional Hardy inequality for $L^1$ is
\begin{equation}  \label{eqn:Hardy-m}
\int_{\R^m}
 \frac{|u(\bx)|}{|\bx|} d \bx \le (m-1)^{-1}\int_{\R^m} |\bp ~ u(\bx)| d \bx  ,\ \ \  u \in C^{\infty}_0(\R^m).
\end{equation}
It can be proved as in the 3-dimensional case that its natural
analogue
\begin{equation}  \label{eqn:DiracHardyIneq-1-m}
\int_{\R^m}
  \frac{|f(\bx)|}{|\bx|} d \bx \le C \int_{\R^m} |(\bgamma \cdot \bp) f(\bx)|d \bx  ,\ \ \  f \in C^{\infty}_0(\R^m, \C^{\ell})
\end{equation}
is not true.

\begin{thm}   \label{thm:WeakDiracHardyIneq-1-m}
For all $f \in C_0^{\infty}(\R^m, \C^{\ell})$
\begin{equation}  \label{eqn:DiracHardyIneq-3-m}
\Vert
 f/|\cdot|
 \Vert_{L^{1, \infty}(\R^m, \C^{\ell})} \le
C_{2,m} \Vert (\bgamma \cdot \bp) f \Vert_{L^1(\R^m,
\,\C^{\ell})},
\end{equation}
where
\begin{equation*}
C_{2,m} \le C_{1,m} \frac{\pi^{1/2} \, m}{\Gamma \big( (m+2)/2
\big)^{1/m} (m-1)^{1-1/m}} ,
\end{equation*}
and $C_{1,m}$ is the optimal constant  in 
Theorem {\rm\ref{thm:WeakDiracSobolevIneq-1-m}}.
\end{thm}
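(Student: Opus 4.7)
The plan is to imitate the argument of Theorem \ref{thm:WeakDiracHardyIneq-1} almost verbatim, replacing the pair of exponents $(3/2,3)$ by $(m/(m-1), m)$, and then to invoke Theorem \ref{thm:WeakDiracSobolevIneq-1-m} to convert $\Vert f\Vert_{m/(m-1),\infty}$ into $\Vert(\bgamma\cdot\bp)f\Vert_{L^1}$.

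First I would apply the weak H\"older inequality from the Appendix with exponents $p=m/(m-1)$ and $q=m$ (which are conjugate in the sense that $(m-1)/m+1/m=1$), taking the first factor to be $f$ and the second factor to be $1/\lvert\cdot\rvert$. This yields
\begin{equation*}
\Vert f/\lvert\cdot\rvert\Vert_{L^{1,\infty}(\R^m,\C^{\ell})}
\le K(m)\,\Vert f\Vert_{L^{m/(m-1),\infty}(\R^m,\C^{\ell})}
\cdot\Vert 1/\lvert\cdot\rvert\Vert_{L^{m,\infty}(\R^m)},
\end{equation*}
with an explicit constant $K(m)$ provided by the Appendix (the same constant that produced the factor $3^{2/3}\pi^{1/3}$ when $m=3$).

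Next I would compute $\Vert 1/\lvert\cdot\rvert\Vert_{L^{m,\infty}(\R^m)}$ directly from the definition \eqref{weakLi}. Since $\{\bx\in\R^m:1/|\bx|>t\}$ is the ball of radius $1/t$, whose Lebesgue measure is $\omega_m/t^m$ with $\omega_m=\pi^{m/2}/\Gamma((m+2)/2)$, one obtains
\begin{equation*}
\Vert 1/\lvert\cdot\rvert\Vert_{L^{m,\infty}(\R^m)}
=\omega_m^{1/m}
=\frac{\pi^{1/2}}{\Gamma((m+2)/2)^{1/m}}.
\end{equation*}
This is where the $\Gamma$-factor in the stated bound for $C_{2,m}$ enters.

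Finally I would apply Theorem \ref{thm:WeakDiracSobolevIneq-1-m} to bound $\Vert f\Vert_{L^{m/(m-1),\infty}}$ by $C_{1,m}\Vert(\bgamma\cdot\bp)f\Vert_{L^1}$ and collect all constants. The resulting prefactor is $C_{1,m}\cdot K(m)\cdot\pi^{1/2}/\Gamma((m+2)/2)^{1/m}$, which should match the claimed $C_{1,m}\pi^{1/2}m/(\Gamma((m+2)/2)^{1/m}(m-1)^{1-1/m})$; this forces $K(m)=m/(m-1)^{(m-1)/m}$, and a sanity check at $m=3$ recovers $K(3)=(27/4)^{1/3}$, which combined with $(4\pi/3)^{1/3}$ reproduces $(9\pi)^{1/3}$ as in Theorem \ref{thm:WeakDiracHardyIneq-1}.

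The proof is essentially an exercise in bookkeeping given the previous results; there is no serious analytical obstacle. The only point that requires a little care is making sure the weak H\"older constant quoted from the Appendix really has the form $m/(m-1)^{(m-1)/m}$ for the conjugate pair $(m/(m-1),m)$, so that the final constant matches the stated bound; everything else is a direct substitution.
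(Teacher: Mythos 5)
Your proposal is correct and is essentially identical to the paper's own proof: weak H\"older with $(p,q)=(m/(m-1),m)$, the computation $\Vert 1/|\cdot|\Vert_{m,\infty}=\omega_m^{1/m}$, and Theorem \ref{thm:WeakDiracSobolevIneq-1-m}. The one point you flagged does check out, since the Appendix constant is $(m-1)^{1/m}+(m-1)^{-(m-1)/m}=(m-1)^{-(m-1)/m}\big((m-1)+1\big)=m/(m-1)^{(m-1)/m}$, exactly the $K(m)$ you need.
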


\begin{proof}
It is easy to see that
$\Vert 1/|\cdot| \Vert_{m, \infty} = (\omega_m)^{1/m}$,
where
$\omega_m$ denotes the volume of the $m$-dimensional unit ball,
and is given by
\begin{equation*}
\omega_m=\frac{\pi^{m/2}}{\Gamma((m+2)/2)}.
\end{equation*}
On applying the weak H\"older inequality in the Appendix with
$p=m/(m-1)$ and $q=m$, we get
\begin{equation}  \label{eqn:DiracHardyIneq-4-m}
\Vert
 f/|\cdot|
 \Vert_{1, \infty}
\le
\big( (m-1)^{1/m}  + (m-1)^{-(m-1)/m} \big) \, \omega_m^{1/m}
\Vert f \Vert_{m/(m-1), \infty}.
\end{equation}
The theorem follows on combining this inequality with
(\ref{eqn:DiracSobolevIneq-3-m}).
\end{proof}


\section{Appendix}
The proofs of Theorems \ref{thm:WeakDiracHardyIneq-1}
and \ref{thm:WeakDiracHardyIneq-1-m} are 
consequences of the following H\"older - type inequality in weak
$L^p$ spaces, which we have been unable to find in the literature.

\begin{thm}[\textbf{Weak H\"older inequality}]   \label{thm:WHolder}
Let  $p>1$, $q>1$ and $p^{-1}+ q^{-1}=1$. If
$f \in L^{p, \infty}({\mathbb R}^d)$
and $g \in L^{q, \infty}({\mathbb R}^d)$,
then $fg \in L^{1, \infty}$ and
\begin{equation} \label{eqn:WHI}
\Vert fg \Vert_{1, \infty}
\le
\Big( (q/p)^{1/q} + (p/q)^{1/p} \Big)
\Vert f \Vert_{p, \infty}\Vert g \Vert_{q, \infty}.
\end{equation}
\end{thm}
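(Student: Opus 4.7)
The plan is to estimate the distribution function of $fg$ by splitting at an auxiliary level and optimizing. Write $A = \|f\|_{p,\infty}$ and $B = \|g\|_{q,\infty}$, and recall that by definition
\[
\mu\bigl(\{|f|>t\}\bigr) \le A^p/t^p, \qquad \mu\bigl(\{|g|>u\}\bigr) \le B^q/u^q
\]
for every $t,u>0$. The starting point is the elementary set inclusion
\[
\{|fg|>s\} \subseteq \{|f|>t\} \cup \{|g|>s/t\},
\]
valid for any $s,t>0$, which upon measuring both sides yields
\[
\mu\bigl(\{|fg|>s\}\bigr) \le \frac{A^p}{t^p} + \frac{B^q t^q}{s^q}.
\]

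Next I would rescale so that the $s$-dependence factors out cleanly. Setting $t = s^{1/p}\tau$ with $\tau>0$ a free parameter, and using $q/p - q = -1$ (a consequence of $1/p+1/q=1$), the right-hand side becomes $s^{-1}\bigl(A^p\tau^{-p} + B^q\tau^q\bigr)$, so that
\[
s\,\mu\bigl(\{|fg|>s\}\bigr) \le \frac{A^p}{\tau^p} + B^q\tau^q
\]
uniformly in $s$. Taking $\sup_{s>0}$ on the left gives $\|fg\|_{1,\infty} \le A^p/\tau^p + B^q\tau^q$, and it remains to minimize the right-hand side over $\tau$.

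The minimization is a routine single-variable calculus exercise: differentiating and setting the derivative to zero gives $\tau^{p+q} = pA^p/(qB^q)$, i.e., $\tau = (p/q)^{1/(pq)}A^{1/q}B^{-1/p}$ using $p+q = pq$. Substituting back and simplifying the exponents via $p(1-1/q)=1$ and $q(1-1/p)=1$ shows that the two summands evaluate to $(q/p)^{1/q}AB$ and $(p/q)^{1/p}AB$ respectively, so the sum is exactly $\bigl((q/p)^{1/q}+(p/q)^{1/p}\bigr)AB$. This is precisely the constant in \eqref{eqn:WHI}.

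The only real obstacle is bookkeeping the exponents in the minimization step so that the constant comes out in the stated symmetric form; once the set-inclusion idea is in place and the $s$-dependence is normalized out by the substitution $t = s^{1/p}\tau$, everything else is algebraic. The conclusion that $fg \in L^{1,\infty}$ is then immediate from the finiteness of the bound.
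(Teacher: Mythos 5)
Your proof is correct and follows essentially the same route as the paper's: both reduce to the inclusion $\{|fg|>s\}\subseteq\{|f|>t\}\cup\{|g|>s/t\}$ (the paper phrases it via Young's inequality with a parameter $\varepsilon$, which is your $\tau^{-1}$ after the rescaling $t=s^{1/p}\tau$), bound $s\,\mu(\{|fg|>s\})$ by $A^p\tau^{-p}+B^q\tau^q$, and optimize to obtain the constant $(q/p)^{1/q}+(p/q)^{1/p}$. No gaps.
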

\begin{proof}

Let $\varepsilon > 0$ be arbitrary, and set
\begin{align*}
A &=\{  {\bf x} \in {\mathbb R}^d \, : \, \varepsilon |f({\bf
x})|>t^{1/p} \, \} \\
B &=\{ {\bf x}  \in {\mathbb R}^d\, : \, \frac{1}{\varepsilon}
|g({\bf x})| > t^{1/q} \, \}  \\
E &=\{ {\bf x}  \in {\mathbb R}^d\, : \, |f({\bf x})
g({\bf x})| > t \, \}.
\end{align*}
Since
\begin{equation}
|f({\bf x}) g({\bf x})| \le
p^{-1} \big( \varepsilon |f({\bf x})|\big)^p
+
q^{-1} \big( \frac{1}{\varepsilon} |g({\bf x})|\big)^q,
\end{equation}
we have
\begin{equation}
E \subset A \cup B,
\end{equation}
which implies that
\begin{equation}  \label{eqn:abc-1}
t \mu (E)  \le
 t  \mu \big( \{ {\bf x} \, : \, \varepsilon |f({\bf x})|>t^{1/p} \, \} \big)
  + t \mu \big(\{ {\bf x} \, : \, \frac{1}{\varepsilon} |g({\bf x})| > t^{1/q} \, \} \big).
\end{equation}
With
\begin{equation}
s:= \frac{t^{1/p}}{\varepsilon}, \quad r:= \varepsilon t^{1/q},
\end{equation}
it follows from (\ref{eqn:abc-1}) that
\begin{align}
t &\mu \big( \{ {\bf x} \, : \, |f({\bf x}) g({\bf x})|>t \, \} \big)  \nonumber \\
&\le
\varepsilon^p s^p
 \mu \big( \{ {\bf x} \, : \,  |f({\bf x})|>s \, \} \big)
 +
\varepsilon^{-q} r^q
\mu \big( \{ {\bf x} \, : \, |g({\bf x})|>r \, \} \big)  \nonumber \\
&\le \varepsilon^p \Vert f \Vert^p_{p, \infty} + \varepsilon^{-q}
\Vert g \Vert^q_{q, \infty}.  \label{eqn:abc-2}
\end{align}
The minimum value of this is the expression on the right-hand side
of (\ref{eqn:WHI}), this being attained when $ \varepsilon = (q
\|g\|^q_{q,\infty}/p \|f\|^p_{p,\infty})^{1/pq}$.
\end{proof}

\subsection*{Acknowledgements}
TU is supported by 
  Grant-in-Aid for Scientific Research (C) 
    No.  21540193, 
  Japan Society for the Promotion of Science.
  AB and WDE are grateful for the support and hospitality of the 
  University of Hyogo in March when this work was 
  completed.


\vspace{15pt}
{\small 
\textbf{Note added in proof.} 
We are grateful to the
 referee for the comment that the first 
 inequality  in (\ref{eqn:IchiSait})
 is not 
a direct consequence of Ichinose and Sait\={o} 
\cite[Theorem 1.3(ii)]{IchinoseSaito},
 but can be established as follows.
 Since $(\sigma \cdot \bp)^2=-\Delta$,
 one has
\begin{equation*}
-i\partial_j (\sigma \cdot \bp)^{-1} 
=\{-i\partial_j/\sqrt{-\Delta}\} \{(\sigma \cdot \bp) /\sqrt{-\Delta}\}
=\sum_{k=1}^3\sigma_k R_j R_k
\end{equation*}
where
$R_j=-i\partial_j/\sqrt{-\Delta}$, $j=1$, $2$, $3$,
are the  Riesz transforms. 
Since $R_j$ is a pseudo-differential operator with 
simbol $\xi_j/|\xi|$,
it is bounded on $L^p$
by the Calder\'on-Zygmund theorem (see \cite{Stein}).
The first inequality in (\ref{eqn:IchiSait})
therefore follows.
}
\vspace{15pt}


\begin{thebibliography}{99}

%
\bibitem{BalinskyEvansSaito} A. Balinsky, W.D. Evans and
Y. Sait\={o},
   {\em Dirac-Sobolev inequalities and estimates for the  zero
   modes of massless Dirac operators},
    J. Math. Phys. \textbf{49} (2008), 043514.
%
\bibitem{EdmundsEvans} D.E. Edmunds and W.D. Evans,
  {\em Hardy operators, functions spaces and embeddings},
  Springer 2004.

%
\bibitem{IchinoseSaito} T. Ichinose and Y. Sait\={o},
   {\em Dirac-Sobolev spaces and Sobolev spaces}, Funkcialaj Ekvacioj
   \textbf{53} (2010), 291--310.
%
\bibitem{LiebLoss} E. Lieb and M. Loss
{\em Analysis}, 2nd edition, American Mathematical Society
2001.
\bibitem{LossYau} M. Loss and H.T. Yau,
   {\em Stability of Coulomb systems with magnetic fields. III.
      Zero energy bound states of the Pauli operators}, Commun.
       Math. Phys. \textbf{104} (1986), 283--290.
%
\bibitem{SaitoUmeda-1} Y. Sait\={o} and T. Umeda,
   {\em  The zero modes and zero resonances of massless Dirac operators},
    Hokkaido Math. J. \textbf{37} (2008), 363--388.
    %
\bibitem{SaitoUmeda-2} Y. Sait\={o} and T. Umeda,
   {\em  The asymptotic limits of zero modes  of massless Dirac operators},
    Lett. Math. Phys. \textbf{83} (2008), 97-106.
%
\bibitem{Stein} E. M. Stein,
    {\em Singular integrals and differential properties
   of functions}, Princeton University Press, Princeton,
  New Jersey, 1970.


\end{thebibliography}
\end{document}